\documentclass[a4paper,freqn,11pt]{amsart}
\usepackage{amssymb, comment}
\usepackage{mathrsfs}
\usepackage{graphicx}
\usepackage[colorlinks=true]{hyperref}
\numberwithin{equation}{section}

\usepackage[a4paper,bindingoffset=0.5cm,left=2cm,right=2cm,top=2.5cm,bottom=2cm,footskip=.8cm]{geometry}

\setlength{\parindent}{0pt}

\def \E {\mathbb E}

\def \P {\mathbb{P}}

\newtheorem{theorem}{Theorem}
\newtheorem{lemma}{Lemma}

\theoremstyle{remark}
\newtheorem{remark}{Remark}
\newtheorem{assumption}{Assumption}

\allowdisplaybreaks

\renewenvironment{proof}[1][\proofname]{\par \normalfont \trivlist
 \item[\hskip\labelsep\itshape #1]\ignorespaces
}{
 \hspace*{\fill}$\Box$ \endtrivlist
}
\renewcommand{\proofname}{\noindent {\bf Proof}}

\begin{document}

\title[Generalized gap acceptance models for unsignalized intersections]{Generalized gap acceptance models\\ for unsignalized intersections}
\author[Abhishek]{Abhishek$^1$}\thanks{$^1$ Korteweg-de Vries Institute for Mathematics, University of Amsterdam, Amsterdam, The Netherlands
({\tt \{Abhishek,m.r.h.mandjes\}@uva.nl})}

\author[Marko Boon]{Marko Boon$^2$}\thanks{
$^2$Department of Mathematics and Computer Science, Eindhoven
University of Technology, P.O. Box 513, 5600 MB Eindhoven, The
Netherlands ({\tt m.a.a.boon@tue.nl})}


\author[Michel Mandjes]{Michel Mandjes$^1$}\thanks{
}


\date{\today}
\maketitle
\begin{abstract}
This paper contributes to the modeling and analysis of unsignalized intersections. In classical gap acceptance models vehicles on the minor road accept any gap greater than the \emph{critical} gap, and reject gaps below this threshold, where the gap is  the time between two subsequent vehicles on the major road.
The main contribution of this paper is to develop a series of generalizations of  existing models, thus increasing the model's practical applicability significantly.
First, we incorporate {driver impatience behavior} while allowing for a realistic merging behavior; we do so by distinguishing between the critical gap and the merging time, thus allowing {\it multiple} vehicles to use a sufficiently large gap. Incorporating this feature is particularly challenging in models with driver impatience.
Secondly, we allow for multiple classes of gap acceptance behavior, enabling us to distinguish between different driver types and/or different vehicle types.
Thirdly, we use the novel M$^X$/SM2/1 queueing model, which has batch arrivals, dependent service times, and
a different service-time distribution for vehicles arriving in an empty queue on the minor road (where `service time' refers to the time required to find a sufficiently large gap). This setup facilitates the analysis of the service-time distribution of an arbitrary  vehicle on the minor road and of the queue length on the minor road. In particular, we can compute the {\it mean} service time, thus enabling the evaluation of the
capacity for the minor road vehicles.

\vspace{1mm}

\noindent  \textbf{Keywords: }unsignalized intersection, priority-controlled intersection, merging time, gap acceptance with impatience, stochastic capacity analysis, queueing theory.
\end{abstract}
\section{Introduction}\label{sect:intro}

In this era of rapidly emerging new technologies for urban traffic control, the vast majority of urban traffic intersections are still priority-based unsignalized intersections, in which the major roads have priority over the minor roads. Due to the increasing level of traffic congestion, the need for state-of-the-art quantitative analysis methods is greater than ever. Performance analysis of unsignalized traffic intersections is traditionally based on so-called \emph{gap acceptance models}, with roots that can be traced back to classical queueing models. The basis of these models is the assumption that the crossing decision of a driver on the minor road is based on the gap between two successive vehicles on the major road \cite{drew3}. This model can be applied in other contexts as well, e.g.\ when analyzing  freeways \cite{drew2,drew1,munjal} and pedestrian crossings \cite{tanner51,wei}.\\

In this paper we contribute to the modeling and analysis of unsignalized intersections. In classical gap acceptance models vehicles on the minor road accept any gap greater than the \emph{critical} gap (or critical headway), and reject gaps below this threshold, where the gap is defined as the time between two subsequent vehicles on the major road. However, in reality,  drivers typically do not need the full critical gap; the remaining part can be used  by the next vehicle on the minor road. In the literature, the part of the critical gap that is really used by the vehicle is referred to as the \emph{merging} time.

Although gap acceptance models have improved greatly since their introduction more than fifty years ago, there are still some fundamental limitations to their practical applicability. As pointed out recently by Liu {\it et al.} \cite{liu2014}, two of its main limitations are the inability to incorporate (1)~different driver behaviors and (2)~heterogeneous traffic; see also \cite{prasetijo2007,prasetijo2012}. The main reason why these features could not be included is that the currently used analysis techniques are based on a single-server queueing model with exceptional first service (see also \cite{welch,yeo,yeoweesakul}). In the gap acceptance literature, this model is commonly referred to as the M/G2/1 queue, a term seemingly introduced by Daganzo \cite{daganzo1977}.
Importantly, in this queue, the `service times' (corresponding to the time required to search for a sufficiently large gap and crossing the intersection or, depending on the application, merging with the high-priority traffic flow) are assumed independent. Incorporating more realistic features, such as driver impatience or different types of driver behavior, creates dependencies that make the model significantly more difficult to analyze.

An important development, that helps us overcome this obstacle, is the recently developed framework for the analysis of queueing models that allows a highly general correlation structure between successive service times (the M$^X$/SM2/1 queue  \cite{AbhishekHeavyTraffic}). Although set up with general applications in mind, this framework turns out to be particularly useful in road traffic models, where the dependencies can be used to model clustering of vehicles on the major road, to differentiate between multiple types of driver behavior, or to account for heterogeneous traffic. The present paper focuses on the latter two features.\\

In the existing literature, various quantitative methods have been used to study unsignalized intersections. The two most common procedures are empirical regression techniques and gap-acceptance models \cite{brilon1997}, but other methods have been employed as well (for example the additive conflict flows technique by Brilon and Wu \cite{brilonwu}). A topic of particular interest concerns the capacity of the minor road \cite{brilon}, which is defined as the maximum possible number of vehicles per time unit  that can pass through an intersection from the minor road. Other relevant performance measures are the queue length and the delay on the minor road.

Due to the driver characteristics such as age, gender, years of driving experience, as well as the vehicle's features, in particular the size and acceleration speed of the vehicle,  there are different types of driver behavior. First, we distinguish between fast vehicles and slow vehicles. The differences in speed/acceleration can be caused by human factors (risk avoidance) or by different vehicle types (e.g.\ cars, trucks). Second, we assume that drivers grow more and more impatient while waiting for a sufficiently large gap on the major road, which was actually confirmed in an empirical data study by Abou-Henaidy {\it et al.} \cite{abouhenaidy94}. Only very few papers on gap acceptance models incorporate driver impatience in the model (\cite{abhishekcomsnets2016,drew2,drew1,weissmaradudin}), because it is known to complicate the analysis drastically \cite{heidemann97}, in particular when trying to maintain a realistic merging behavior.

In earlier papers, such as \cite{abhishekcomsnets2016}, three variations of gap acceptance models can be distinguished. The first  is the basic model in which all low-priority drivers are assumed to use the same fixed critical gap. In the second model,  critical gaps are random, with drivers sampling a new critical gap at each new attempt. This is typically referred to as \emph{inconsistent} driver behavior. The third model is also known as the \emph{consistent} model, in which a random critical gap is sampled by each driver for his first attempt only, and the driver then uses that same value at his subsequent attempts. The present paper generalizes all three gap acceptance models into a more realistic model that covers various realistic driving behavior features.

One of the first studies using queueing models for unsignalized intersections was by Tanner \cite{tanner62}, who assumed  constant critical gap and move-up time. Tanner first determines the mean delay of the low priority vehicles, and characterizes the capacity of the minor road as the arrival rate of the low-priority vehicles, at which the mean delay grows beyond any bound. Tanner's model has been generalized in various ways  \cite{catchpoleplank,cheng,hawkes,heidemann94,wegmann1991,weissmaradudin}  by allowing random critical gaps and move-up times, also analyzing performance measures such as the queue length and the waiting time on the minor road. Heidemann and Wegmann \cite{heidemann97} give an excellent overview of the earlier existing literature. Moreover, they add a stochastic dependence between the critical gap and the merging (move-up) time, and study the minor road as an M/G2/1 queue, to determine the queue length, the delay and the capacity on the minor road. A further generalization, dividing the time scale of the major stream into four regimes (viz.\ free space, single vehicle, bunching, and queueing) was investigated by Wu \cite{wu2001}.\\


Although the literature on gap acceptance models is relatively mature and the existing gap acceptance models have proven their value, they are sometimes criticized for \emph{looking like} rigorous mathematics, but in reality being based on pragmatic simplifications. As a consequence, the produced results might be of  a  correct magnitude, but are, however, only of approximative nature~\cite{brilonwu}. The main goal of this paper is to increase the significance of gap acceptance models by taking away some of these concerns by including essential new features that are typically encountered in practice.
In more detail, the main novelties of this paper are the following.
\begin{itemize}
  \item First, we incorporate {driver impatience behavior} while allowing for a realistic merging behavior. Put more precisely, we distinguish between the critical gap and the merging time, thus allowing {\it multiple} vehicles to use a sufficiently large gap. Incorporating this feature is particularly challenging in models with driver impatience.
  \item Secondly, we allow for multiple classes of gap acceptance behavior, enabling us to distinguish between different driver types and/or different vehicle types.
  \item Thirdly, we use a queueing model in which vehicles arriving in an empty queue on the minor road have different service-time distributions than the queueing vehicles (where `service time' is meant in the sense introduced above). This setup facilitates the analysis of the queue length on the minor road as well as the service-time distribution of an arbitrary low-priority vehicle. The capacity for the minor road vehicles is then derived from the expectation of the service time for queuers.
\end{itemize}

The remainder of this paper is organized as follows. In Section \ref{model description} we introduce our model. Then the full queue-length analysis is presented in Section \ref{Queing length}, whereas the capacity of the minor road is determined in Section \ref{capacity on minor road}. In Section \ref{numerics} numerical examples demonstrate the impact of driver behavior on the capacity of 
the minor road.

\section{Model description}\label{model description}

In this section we provide a detailed mathematical description of our new model. We study an unsignalized, priority controlled intersection where drivers on the major road have priority over the drivers on the minor road. For notational convenience, we will focus on the situation with one traffic stream on each road, but several extensions also fall in our framework (see, for example, Figure  \ref{fig:intersection}).
Vehicles on the major road arrive according to a Poisson process with intensity $q$. On the minor road, we have a \emph{batch} Poisson arrival process with $\lambda$ denoting the arrival intensity of the batches (platoons) and $B$ denoting the (random) platoon size. We assume that traffic on the major road is not hindered by traffic on the minor road, which is a reasonable assumption in most countries (but not all, see \cite{liu2014}).

\begin{figure}[!ht]
\begin{center}
\includegraphics[width=0.7\linewidth]{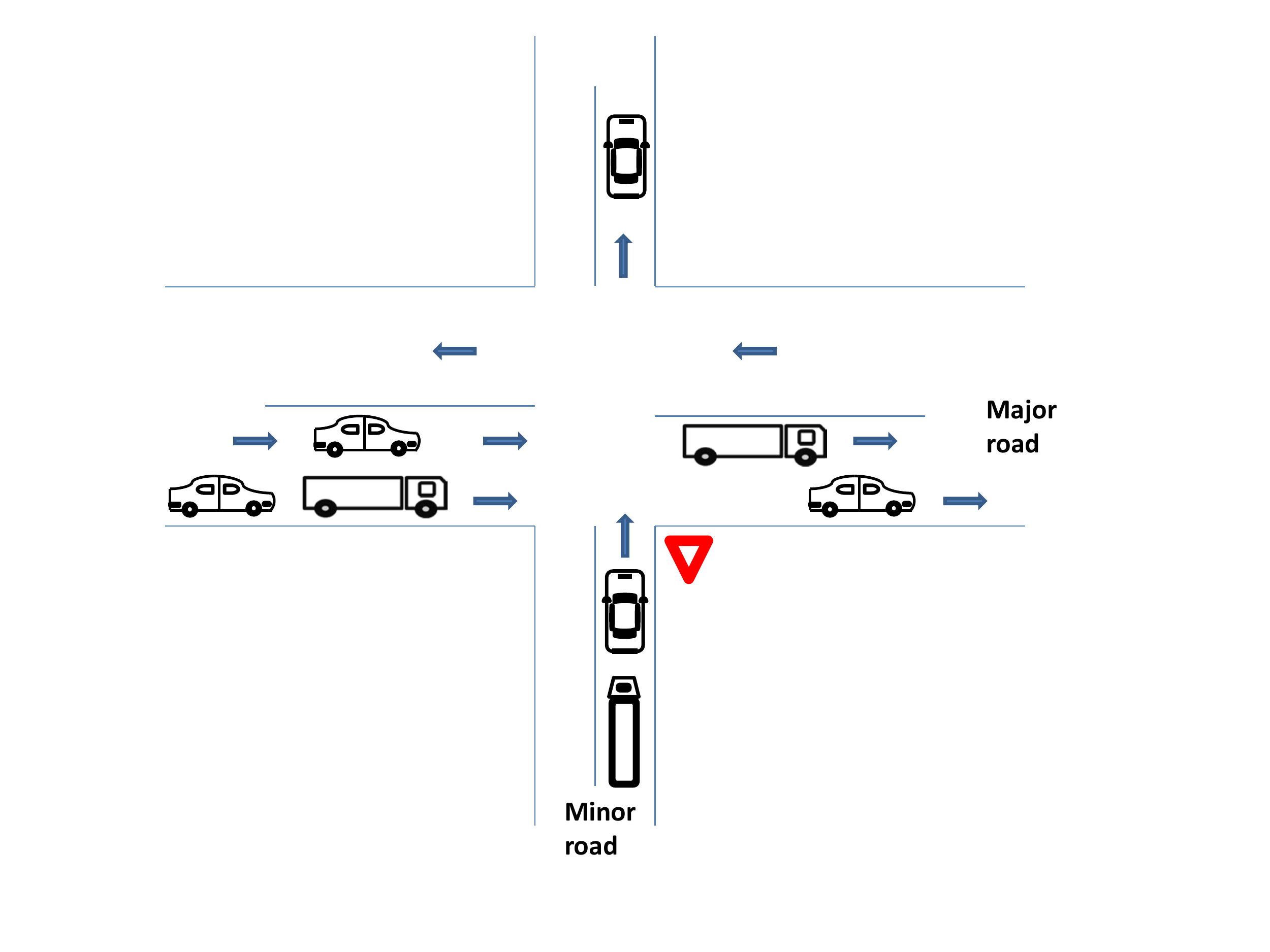}
\end{center}
\caption{The situation analyzed in this paper.
}
\label{fig:intersection}
\end{figure}

The drivers on the minor road cross the intersection as soon as they come across a sufficiently large gap between two subsequent vehicles on the major road. Any gap that is too small for a driver on the minor road is considered a \emph{failed attempt} to cross the intersection and adds to the impatience of the driver.\\

In the existing literature, \emph{three} model variants have been introduced: (1) the standard model with constant critical gaps, (2) inconsistent gap acceptance behavior where a new random critical gap is sampled at each attempt, and (3) consistent behavior where drivers sample a random critical gap which they use for all attempts (cf.\ \cite{abhishekcomsnets2016,heidemann97}).
In this paper we apply a framework that allows us to create a \emph{single} {model} that generalizes all three aforementioned model variations, with a combination of consistent \emph{and} inconsistent behavior, allowing for heterogeneous traffic and driver impatience. This is a major enhancement of the existing models, as such a general model has not been successfully analyzed so far. Unfortunately, this requires a significant adaptation of the underlying M/G2/1 queueing model that has been the basis of all gap acceptance models so far. \\

We will now describe the full model dynamics in greater detail. We distinguish between $R\in\{1,2,\ldots\}$ driver/vehicle profiles. The profile of any arriving vehicle on the minor road is modeled as a random variable that equals $r$ with probability $p_r$ (with $\sum_{r=1}^R p_r=1$). The profile represents the \emph{consistent part} of the gap selecting behavior.
Each driver profile has its own critical gap acceptance behavior, allowing us to distinguish between drivers that are willing to accept small gaps and (more cautious) drivers that require longer gaps, but also to allow for heterogeneous traffic with, for example, trucks requiring larger gaps than cars or motor cycles.

We introduce $T_{(i,r)}$ to denote the critical gap of a driver with profile $r$ during his $i$-th attempt. The \emph{impatient behavior} is incorporated in our framework by letting the critical gap $T_{(i,r)}$ depend on the attempt number $i=1,2,3,\dots$. This generalizes classical gap acceptance models where the critical gap is the same for all driver types and throughout all attempts.

Finally, we introduce the \emph{inconsistent} component of our model by allowing the critical gap of a driver of type $r$ in attempt $i$ ($T_{(i,r)}$, that is)  to be a discrete random variable which can take any of the following values:
\[
T_{(i,r)}=\begin{cases}
u_{(i,1,r)} &\quad \text{ with probability }p_{(i,1,r)},\\
u_{(i,2,r)} &\quad \text{ with probability }p_{(i,2,r)},\\
\vdots&\\
u_{(i,M,r)} &\quad \text{ with probability }p_{(i,M,r)},
\end{cases}
\]
with $\sum_{k=1}^M p_{(i,k,r)}=1$ for all $(i,r)$. Inconsistent behavior is used in the existing literature when the driver samples a new random critical gap after each failed attempt. It is generally being criticized for not being very realistic, because it is unlikely that a driver's gap acceptance behavior fluctuates significantly throughout multiple attempts. In our model, however, it is an excellent way to include some randomness in the gap selection process (which is due to the fact that not every driver from the same profile will have \emph{exactly} the same critical gap at his $i$-th attempt), because we can limit the variability by cleverly choosing the values $u_{(i,k,r)}$.

Realistic values for the critical gaps $u_{(i,k,r)}$ can be empirically obtained by measuring the lengths of all rejected and accepted gaps for each vehicle type. One needs to distinguish between systematic fluctuations due to different driver/vehicle types and (smaller) random fluctuations within the same driver profile. Since the index $i$ represents impatience, it makes sense that $u_{(i,k,r)}$ is decreasing in $i$. Moreover, since $r$ denotes the driver profile, $u_{(i,k,r)}$ is relatively large for driver profiles $r$ that represent slow vehicles/drivers and vice versa. The variability in $u_{(i,1,r)}, \dots, u_{(i,M,r)}$ can be limited to avoid unrealistic fluctuations. For example: if driver profile $r=1$ corresponds to trucks and $r=2$ corresponds to motor cycles, then it may be quite realistic that \[\min_{1\leq k\leq M} u_{(i,k,1)} >  \max_{1\leq k\leq M} u_{(i,k,2)}.\] Paraphrasing, the slowest motor cycle will always be faster than the fastest truck.

To make the model even more realistic, we assume that the actual vehicle \emph{merging time}, denoted by $\Delta_r$ for profile $r$, is less than the critical gap $T_{(i,r)}$. As a consequence, the remaining part of the critical headway can be used by following drivers.

At first sight, our model has a few seeming limitations. In the first place, we assume the critical gap has a discrete distribution with $M$ possible values. In the second place, as will be discussed in great detail in the next section, the analysis technique requires that there is a maximum number of possible attempts: $i\in\{1,2,\dots,N\}$. It is important to note, however, that both issues do not have big practical consequences, since one can choose $M$ and $N$ quite large. This does come at the expense of increased computation times, obviously.

\section{ Queue length analysis}\label{Queing length}
The main objective of this section is to determine the stationary queue length on the minor road at departure epochs of low-priority vehicles. In the next section we will use these results to determine the service-time distribution of the low-priority vehicles, which enables us to evaluate the capacity of the minor road.

\subsection{Preliminaries}

We start by describing the queueing process on the minor road.
All arriving vehicles experience three stages before leaving the system: queueing, scanning and merging. The queueing phase is defined as the time between joining the queue and reaching the front of the queue (for queuers this is the moment that the preceding driver accepts a gap and departs). The scanning phase commences when the driver starts scanning for gaps and ends when a sufficiently large gap is found. The scanning phase is followed by the merging phase, which ends at the moment that the vehicle merges into the major stream. Note that phases 1 and 2 may have zero length. The length of phase 1 is called the \emph{delay}. The length of phase 3 for a vehicle of profile $r$ is denoted by the profile-specific constant $\Delta_r$. We define the \emph{service time} as the total time spent in phases 2 and 3 (scanning and merging).

\begin{remark}
In some papers, an additional move-up phase is distinguished but, depending on the application (freeway or unsignalized intersection), this phase can be incorporated in one of the other phases. See Heidemann and Wegmann \cite[Section 3]{heidemann97} for more details.
\end{remark}

The queue-length analysis is based on observing the system at \emph{departure epochs}. In contrast to the classical M/G2/1 queueing models, in our current model we now have \emph{dependent} service times. This dependence is caused by the fact that the length of the lag (i.e., the remaining gap) left by the previous driver, due to the impatience, now depends on the attempt number \emph{and} the profile of the previous driver. Using the notation introduced in the previous section: we need to know the critical gap $T_{(i, r)}$ of the previous driver of profile $r$ that led to a successful merge/crossing of the intersection. A queueing model with this type of dependencies is called a queueing model with \emph{semi-Markovian} service times, sometimes referred to as the M/SM/1 queue. Although several papers have studied such a queueing model (cf.\ \cite{QUESTA2017,AK,cin_s,desmit86,gaver,neuts66,neuts77b,neuts77a,pur}), we still need to make several adjustments in order to make it applicable to our situation. The analysis below is based on the framework that was recently introduced in \cite{QUESTA2017}.

Denote by $X_n$ the queue length at the departure epoch of the $n$-th vehicle, i.e. the number of vehicles \emph{behind} the $n$-th driver at the moment that he merges into the major stream. We use $G^{(n)}$ to denote the service time of the $n$-th vehicle and $A_n$ to denote the number of arrivals (on the minor road) during this service time. Since vehicles arrive in batches (platoons), we denote by $B_n$ the size of the batch in which the $n$-th vehicle arrived. Let $X(\cdot)$, $A(\cdot)$, and $B(\cdot)$ denote, respectively, the probability generating functions (PGFs) of the limiting distributions of these random variables,
\[
X(z)=\lim_{n\to\infty}\E[z^{X_n}], \quad A(z)=\lim_{n\to\infty}\E[z^{A_n}], \quad B(z)=\lim_{n\to\infty}\E[z^{B_n}], \qquad |z| \leq 1,
\]
whereas $\tilde{G}(\cdot)$ is the limiting Laplace-Stieltjes transform (LST) of $G^{(n)}$:
\[\tilde{G}(s) =\lim_{n\to\infty}\E[e^{-s G^{(n)}}], \qquad s\geq 0.\]
Note that the arrivals 
constitute a batch Poisson process. Therefore, $A(z)$ can be expressed in terms of $B(z)$ and $\tilde{G}(s)$,
\begin{equation}\label{eqn:pgfAz}
A(z)=\tilde{G}(\lambda(1-B(z))),
\end{equation}
but finding an expression for $\tilde{G}(s)$ is quite tedious. For this reason, we split the analysis in two parts. This section discusses how to find $X(z)$ in terms of $A(z)$, while Section \ref{capacity on minor road} shows how to find $G(z)$ for the gap acceptance model considered in this paper.

The starting-point of our analysis is the following recurrence relation:
\begin{align}
X_n = \left\{
\begin{array}{l l}
X_{n-1}-1+A_n & \quad \text{if $X_{n-1} \geq 1$ }\\
A_n +B_n-1& \quad \text{if $X_{n-1} =0$}
\end{array} \right., ~~~ n=1,2,3,\dots.
\label{recurA}
\end{align}
As argued before, $\{X_n\}_{n\geq1}$ is not a Markov chain. In order to obtain a Markovian model, we need to keep track of all the characteristics of the $(n-1)$-th driver. Let $J_n= (i,k, r)$ contain these characteristics of the $(n-1)$-th driver, where $i$ is the `succeeded attempt number', $k$ indicates which random gap the driver sampled, and $r$ denotes the profile of the driver. 

In \cite{AbhishekHeavyTraffic} it is shown that the ergodicity condition of this system is given by
\begin{align}
\lim_{n\to\infty} \E[A_n|X_{n-1}\geq 1]<1. \label{stability}
\end{align}
Throughout this paper we assume that this condition holds.

\subsection{A queueing model with semi-Markovian service times and exceptional first service}

To find the probability generating function (PGF) of the queue-length distribution at departure epochs, we use the framework introduced in \cite{AbhishekHeavyTraffic}. There an analysis is presented of the M$^X$/SM2/1 queue, which is a very general type of single-server queueing model with batch arrivals and semi-Markovian service times with exceptional service when the queue is empty, extending earlier results (cf.\ also \cite{QUESTA2017,cin_s,gaver,neuts66}) to make it applicable to situations as the one discussed in the present paper. To improve the readability of this paper, we briefly summarize the most important results from \cite{AbhishekHeavyTraffic} that are also valid for our model.

In \cite{AbhishekHeavyTraffic} the type of the $n$-th customer is denoted by $J_{n}\in \{1,2, \dots, N\}$. In order to apply these results correctly to our model, we need to make one small adjustment. In order to determine the service time of the $n$-th customer, we need to know exactly how much of the critical gap of the $(n-1)$-th customer remains for this $n$-th customer.
As it turns out in the next section, this requires the following knowledge about the $(n-1)$-th customer:
\begin{enumerate}
\item[(A)] we need to know the sampled critical gap $u_{(i,k,r)}$ of the $(n-1)$-th customer;
\item[(B)] we need to know whether the $(n-1)$-th customer emptied the queue at the minor road and the $n$-th customer was the first in a new batch arriving some time after the departure of his predecessor.
\end{enumerate}

To start with the latter: a consequence of (B) is that we need a queueing model with so-called \emph{exceptional first service}.
As a consequence of (A), we use the triplet $(i, k, r)$, extensively discussed in the previous subsection, to denote the `customer type', which should be interpreted as a vehicle of profile $r$ that accepted the $i$-th gap, where it sampled $u_{(i,k,r)}$ as its critical headway. It means that if the $n$-th customer is of type $(i,k,r)$, then his \emph{predecessor} (which is the $(n-1)$-th customer) succeeded in his $i$-th attempt, while being of profile $r$ and having critical gap $u_{(i,k,r)}$.


A translation from our model to the model in \cite{AbhishekHeavyTraffic} can easily be made by mapping our $\bar N:=NMR$ customer types onto their $N$ customer types (for instance, if $j=(i-1)MR+(k-1)R+r$, then $j\in\{1,2,\dots,\bar N\}$ can serve as the customer type in \cite{AbhishekHeavyTraffic}).

Mimicking the steps in \cite{AbhishekHeavyTraffic}, it is immediate that
\begin{align}
\label{twentysixAA_BB}
\E[z^{X_{n}}&1_{\{J_{n+1}=(j,l,r_1)\}}]\\
=&\,\frac{1}{z}\sum_{r_0=1}^R\sum_{k=1}^{M}\sum_{i=1}^{N} \E[z^{X_{n-1}}1_{\{J_{n}=(i,k,r_0)\}}]\E[z^{A_{n}}1_{\{J_{n+1}=(j,l,r_1)\}}|J_{n}=(i,k,r_0),X_{n-1} \geq 1]\,+\nonumber\\
\ \ \ & \,\frac{1}{z}\sum_{r_0=1}^R\sum_{k=1}^{M}\sum_{i=1}^{N}\Big(B(z)\E[z^{A_{n}}1_{\{J_{n+1}=(j,l,r_1)\}}|J_{n}=(i,k,r_0),X_{n-1}=0]\,-\nonumber\\
&\,\E[z^{A_{n}}1_{\{J_{n+1}=(j,l,r_1)\}}|J_{n}=(i,k,r_0),X_{n-1}\geq 1]\Big)\P(X_{n-1}=0,J_{n}=(i,k,r_0)),\nonumber
\end{align}
for $n\in {\mathbb N}$, $ j\in\{1,2,\dots,N\}$, $l\in\{1,2,\dots,M\}$, and $r_1\in\{1,2,\dots,R\}$. 

To further evaluate these expressions, we need to introduce some additional notation.
Define, for $i,j\in\{1,2,\dots,N\}$ $k,l\in\{1,2,\dots,M\}$, $r_0,r_1\in\{1,2,\dots,R\}$,  and $|z| \leq 1$,

\begin{align*}
A^{(j,l,r_1)}_{(i,k,r_0)}(z)&=\lim_{n\to\infty}\E[z^{A_{n}}1_{\{J_{n+1}=(j,l,r_1)\}}|J_n=(i,k,r_0),X_{n-1}\geq 1],\\
A^{*(j,l,r_1)}_{\:\:(i,k,r_0)}(z)&=\lim_{n\to\infty}\E[z^{A_{n}}1_{\{J_{n+1}=(j,l,r_1)\}}|J_n=(i,k,r_0),X_{n-1}=0],
\end{align*}
where $A^*$ corresponds to the number of arrivals during the (exceptional) service of a vehicle arriving when there is no queue in front of him. In addition,
\begin{align}
f_{(j,l,r_1)}(z)&= \lim_{n \rightarrow \infty}\E[z^{X_n}1_{\{J_{n+1}=(j,l,r_1)\}}],
\label{fizA_BB}
\end{align}
such that
\begin{align}
f_{(j,l,r_1)}(0)&= \lim_{n \rightarrow \infty} \P(X_n=0,J_{n+1}=(j,l,r_1)).
\label{fi0A_BB}
\end{align}
The above definitions entail that
\begin{equation}
X(z)=\sum_{r_1=1}^R\sum_{l=1}^{M}\sum_{j=1}^{N}f_{(j,l,r_1)}(z). \label{F(z)_BB}
\end{equation}
In steady state, Equation \eqref{twentysixAA_BB} thus leads to the following system of $\bar N$ linear equations: for any $j,$ $l$, and $r_1$,
\begin{align}\label{twentysevenA_BB}
 &\left(z-A^{(j,l,r_1)}_{(j,l,r_1)}(z)\right)f_{(j,l,r_1)}(z)-\sum_{\substack{r_0=1,\\r_0\neq r_1}}^RA^{(j,l,r_1)}_{(j,l,r_0)}(z)f_{(j,l,r_0)}(z)\\
 &-\sum_{r_0=1}^R\sum_{\substack{k=1,\\k\neq l}}^{M}A^{(j,l,r_1)}_{(j,k,r_0)}(z)f_{(j,k,r_0)}(z)
-\sum_{r_0=1}^R\sum_{k=1}^{M}\sum_{\substack{i=1,\\i\neq j}}^{N}A^{(j,l,r_1)}_{(i,k,r_0)}(z)f_{(i,k,r_0)}(z)\nonumber\\
&=\sum_{r_0=1}^R\sum_{k=1}^{M}\sum_{i=1}^{N}(B(z)A^{*(j,l,r_1)}_{\:\:(i,k,r_0)}(z)-A^{(j,l,r_1)}_{(i,k,r_0)}(z))
f_{(i,k,r_0)}(0).\nonumber
\end{align}
From the above we conclude that when we know $A^{(j,l,r_1)}_{(i,k,r_0)}(z)$ and $A^{*(j,l,r_1)}_{\:\:(i,k,r_0)}(z)$,
the PGFs of our interest can be evaluated. In the next section we show how to find the Laplace-Stieltjes transforms of the conditional service-time distributions, which lead to $A^{(j,l,r_1)}_{(i,k,r_0)}(z)$ and $A^{*(j,l,r_1)}_{\:\:(i,k,r_0)}(z)$. 

We refer to \cite[Section 3]{AbhishekHeavyTraffic} for more details on how to write the linear system of equations (\ref{twentysevenA_BB}) in a convenient matrix form and solve it numerically. In \cite{AbhishekHeavyTraffic}, and in more detail in \cite{QUESTA2017}, it is also discussed how the PGF of the queue-length  at \emph{arbitrary epochs} (denoted by $X^{\rm arb}$) can be found; in particular the clean relation between $X^{\rm arb}$ and $X$:
\begin{equation}
 \E[z^{X^{\rm arb}}]=\E[z^{X}] \frac{\E[B](1-z)}{1-B(z)}
 \label{F(z):arbitrary}
\end{equation}
is useful in this context.
The interpretation is that the number of customers at an \emph{arbitrary epoch} is equal to the number of customers left behind by an arbitrary \emph{departing} customer, excluding those that arrived in the same batch as this departing customer. We refer to \cite{QUESTA2017} for more details and a proof of \eqref{F(z):arbitrary}.

\section{Capacity}\label{capacity on minor road}

In this section we derive the Laplace-Stieltjes transform of the service-time distribution of our gap acceptance model. This service-time analysis serves two purposes. Firstly, it is used to complete the queue-length analysis of the previous section. Secondly, it is an important result by itself, because it is directly linked to the capacity of the intersection, which is defined as the maximum possible number of vehicles per time unit that can pass from the minor road. An alternative but equivalent definition of capacity, also employed by Heidemann and Wegmann \cite{heidemann97}, is the maximum arrival rate for which the corresponding queue remains stable. Combining \eqref{eqn:pgfAz} and \eqref{stability},
and denoting
\[g:=\lim_{n\to\infty}\E[G^{(n)}\,|\,X_{n-1}\geq 1],\]
we can rewrite the stability condition as
\begin{equation}\label{stabilityAlt}
\lambda\E[B]\,g<1,
\end{equation}
where $\lambda$ is the arrival rate of batches, implying that the arrival rate of individual vehicles is $\lambda\E[B]$. As a consequence, the capacity of the minor road is given by
\begin{align}
C=\frac{1}{g}.\label{capacity}
\end{align}

Every driver (of profile $r$) samples a random $T_{(i,r)}$ at his $i$-th attempt, irrespective of his previous attempts. However, he uses only a constant $\Delta_r$ of the gap to cross the main road. As a consequence, when accepting the gap, the remaining part $(T_{(i,r)}-\Delta_r)$ can be used by subsequent drivers. In order to keep the analysis tractable, we need the following assumption.

\begin{assumption}[Limited gap reusability assumption]\label{assumptiongapuse}
We assume that \emph{at most one} driver can reuse the remaining part of a critical gap accepted by his predecessor. The means that $(T_{(i,r)}-\Delta_r)$ should \emph{not} be large enough for more than one succeeding vehicle to use it for his own critical headway.
\end{assumption}

To avoid confusion, we stress that this assumption does \emph{not} mean that a large gap between successive vehicles on the major road cannot be used by more than one vehicle from the minor road. It \emph{does} mean, however, that the difference between \emph{critical} gaps of two successive vehicles cannot be so large that the second vehicle's critical gap is completely contained in its predecessor's critical gap (excluding the merging time).

\begin{remark}
This assumption seems to be realistic in most, but not all, cases, because one can envision for instance situations in which (parts of) a critical gap accepted by, say, a slow truck might be reused by two fast accelerating vehicles following this truck. It is noted, however, that if Assumption \ref{assumptiongapuse} is violated, despite our method no longer being exact, the computed capacity is still very close to the real, simulated capacity;
numerical evidence of this property is provided in Section \ref{numerics}.
It implies that our method can still be used as a very accurate approximation in those (rare) cases where Assumption \ref{assumptiongapuse} is violated.
\end{remark}

We recall that there is a complicated dependence structure between service times of two successive vehicles, due to our assumption that a vehicle can use part of the gap left behind by its predecessor \emph{combined} with the assumption that drivers have different profiles and tend to become impatient. This obstacle can be overcome by analyzing the  service time  of a vehicle of type $(j, l, r_1)$ in the situation that its predecessor was of type $(i, k, r_0)$.

We proceed by introducing some notation. Let $E_y$ be the event that there is a predecessor gap available of length $y$. Denote by $\pi_{(i,k,r_0)}$ the probability that an arbitrary driver with profile $r_0$ succeeds in his $i$-th attempt with critical headway $u_{(i,k,r_0)}$. In addition, let
\begin{align}
\tilde{G}_{(j,l,r_1)}(s,y)&=\E\left[e^{-sG^{(n)}}1_{\{J_{n+1}=(j,l,r_1)\}}\,|\,E_y\right],\label{G_(j,l)(s,y)_B}
\end{align}
for $0\leq y \leq u_{(i,k,r_0)}-\Delta_{r_0}$, $j=1,2,\dots,N$, and $l=1,2,\dots,M$. The LST of an arbitrary service time can be found by conditioning on the type of the current vehicle and its predecessor:
\begin{align}
\tilde{G}(s)=\E[e^{-sG^{(n)}}]=\sum_{r_0=1}^R\sum_{r_1=1}^R\sum_{j=1}^N\sum_{l=1}^M\sum_{i=1}^N\sum_{k=1}^M\tilde{G}^{(j,l,r_1)}_{(i,k,r_0)}(s)\P(J_n=(i,k,r_0)),\label{G_B}
\end{align} where \begin{align}\label{G_(j,l,r_1)_(i,k,r_0)}
                    \tilde{G}^{(j,l,r_1)}_{(i,k,r_0)}(s)=\E\left[e^{-sG^{(n)}}1_{\{J_{n+1}=(j,l,r_1)\}}|J_n=(i,k,r_0)\right],
                  \end{align}and
                  \begin{align}\label{P(J=(i,k))_B_2}
                  \P(J_n=(i,k,r_0))=\pi_{(i,k,r_0)}p_{(i,k,r_0)}p_{r_0}.
                  \end{align}

We are now ready to state the main result of this paper. Define
\[\bar f_{(i,k,r)}:= \frac{f_{(i,k,r)}(0)}{\pi_{(i,k,r)}p_{(i,k,r)}p_{r}},\:\:\:
\bar u_{(i,k,r)}:= u_{(i,k,r)}-\Delta_{r}.\]
\begin{theorem}\label{thm1}
The partial service-time LST of a customer, jointly with the event that he is of type $(j,l,r_1)$,
given that his predecessor was of type $(i,k,r_0)$, is given by
\begin{align}\label{G_ij_B}
\tilde{G}&^{(j,l,r_1)}_{(i,k,r_0)}(s)\,=\,\tilde{G}_{(j,l,r_1)}(s,\bar u_{(i,k,r_0)})\left(1-\bar f_{(i,k,r_0)}\right)+\\
& \Bigg(\int_{x=0}^{\bar u_{(i,k,r_0)}}\lambda e^{-\lambda x}\tilde{G}_{(j,l,r_1)}(s,\bar u_{(i,k,r_0)} -x){\rm d}x + \tilde{G}_{(j,l,r_1)}(s,0) e^{-\lambda\bar u_{(i,k,r_0)}}\Bigg)\bar f_{(i,k,r_0)},\nonumber
\end{align} where
\begin{itemize}
\item[$\circ$]
$f_{(i,k,r_0)}(0)$ is obtained by solving the system of equations \eqref{twentysevenA_BB}, and
\item[$\circ$]
expressions for $\tilde{G}_{(j,l,r_1)}(s,y)$ and $\pi_{(i,k,r_0)}$ are provided in Lemmas \ref{lemmaGsy} and \ref{lemmapis} below.
\end{itemize}
\end{theorem}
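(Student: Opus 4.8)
The plan is to condition on the queue length left behind by the predecessor and to exploit the memorylessness of the Poisson batch-arrival process. Starting from the definition \eqref{G_(j,l,r_1)_(i,k,r_0)}, I would first split the conditioning event $\{J_n=(i,k,r_0)\}$ according to whether the predecessor emptied the queue upon merging, i.e.\ into $\{X_{n-1}=0\}$ and $\{X_{n-1}\geq 1\}$. The crucial bookkeeping step is to identify $\bar f_{(i,k,r_0)}$ with the conditional probability $\P(X_{n-1}=0\mid J_n=(i,k,r_0))$: by stationarity the limiting laws of $J_n$ and $J_{n+1}$ coincide, so \eqref{fi0A_BB} yields $\lim_{n\to\infty}\P(X_{n-1}=0,J_n=(i,k,r_0))=f_{(i,k,r_0)}(0)$, while \eqref{P(J=(i,k))_B_2} supplies the denominator $\pi_{(i,k,r_0)}p_{(i,k,r_0)}p_{r_0}$. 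Hence $1-\bar f_{(i,k,r_0)}$ and $\bar f_{(i,k,r_0)}$ are precisely the two weights appearing in \eqref{G_ij_B}, and it remains to evaluate the conditional partial LST on each of the two events.

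On the event $\{X_{n-1}\geq 1\}$ the $n$-th vehicle is already waiting at the front of the queue at the instant its predecessor merges, so it begins scanning immediately. By Assumption \ref{assumptiongapuse} the only reusable remnant it can face is the predecessor's leftover headway, of deterministic length $\bar u_{(i,k,r_0)}=u_{(i,k,r_0)}-\Delta_{r_0}$; thus the event $E_{\bar u_{(i,k,r_0)}}$ of \eqref{G_(j,l)(s,y)_B} holds, and the conditional partial LST equals $\tilde G_{(j,l,r_1)}(s,\bar u_{(i,k,r_0)})$. This reproduces the first term of \eqref{G_ij_B}.

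On the event $\{X_{n-1}=0\}$ the predecessor leaves an empty queue, so the $n$-th vehicle is the first member of the next platoon, arriving a time $x$ after the departure epoch. By the memoryless property of the Poisson batch process, $x$ is exponentially distributed with rate $\lambda$, measured from the same instant as the leftover headway $\bar u_{(i,k,r_0)}$. I would then condition on $x$: for $0\leq x\leq \bar u_{(i,k,r_0)}$ the remnant still available when scanning starts is $\bar u_{(i,k,r_0)}-x$, contributing $\tilde G_{(j,l,r_1)}(s,\bar u_{(i,k,r_0)}-x)$ weighted by the density $\lambda e^{-\lambda x}$; for $x>\bar u_{(i,k,r_0)}$, which occurs with probability $e^{-\lambda\bar u_{(i,k,r_0)}}$, the leftover headway has fully elapsed and---by memorylessness of the major-road (rate-$q$) process---the residual time to the next major-road vehicle is again exponential, so the vehicle simply faces a fresh gap, i.e.\ $E_0$, contributing $\tilde G_{(j,l,r_1)}(s,0)$. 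Summing these two contributions reproduces the bracketed expression multiplying $\bar f_{(i,k,r_0)}$ in \eqref{G_ij_B}.

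Assembling the two cases through the law of total probability then gives \eqref{G_ij_B}. The main obstacle I anticipate is the empty-queue case: one must argue carefully that the elapsed-time variable $x$ is genuinely $\mathrm{Exp}(\lambda)$ and independent of the predecessor's type, which requires the departure epoch to act as a regeneration point for the batch-arrival stream, and that once $x$ exceeds $\bar u_{(i,k,r_0)}$ no residual information about the predecessor's headway survives, so that the reduction to $\tilde G_{(j,l,r_1)}(s,0)$ is exact rather than approximate. By contrast, the non-empty case and the identification of $\bar f_{(i,k,r_0)}$ are comparatively routine once the stationarity shift between $J_n$ and $J_{n+1}$ is in place.
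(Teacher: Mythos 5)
Your proposal is correct and follows essentially the same route as the paper's proof: splitting on $\{X_{n-1}=0\}$ versus $\{X_{n-1}\geq 1\}$, identifying $\bar f_{(i,k,r_0)}$ as the limiting conditional probability of an empty queue given the predecessor's type via \eqref{fi0A_BB} and \eqref{P(J=(i,k))_B_2}, and in the empty case integrating over the $\mathrm{Exp}(\lambda)$ interarrival time $x$ with the remnant $\max\{\bar u_{(i,k,r_0)}-x,0\}$, which the paper likewise writes as a single integral against $\lambda e^{-\lambda x}$ before splitting it at $\bar u_{(i,k,r_0)}$. The points you flag as delicate (the regeneration of the batch stream at a departure leaving an empty queue, and the reliance on Assumption \ref{assumptiongapuse} for the non-empty case) are exactly the ones the paper addresses, the latter in Remark \ref{remark: sufficient cond_supermodel}.
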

\begin{proof}
This partial LST $\tilde{G}^{(j,l,r_1)}_{(i,k,r_0)}(s)$ can be expressed in terms of $\tilde{G}_{(j,l,r_1)}(s,y)$ by first distinguishing between the case where there was no queue upon arrival of the $n$-th customer ($X_{n-1}=0$), and the case where there was a queue ($X_{n-1}\geq 1$). If the customer arrives in an empty system at time $t$ and the previous arrival took place (merged on the major road) at time $t-x$, then we need to check whether the remaining gap $u_{(i,k,r_0)}-\Delta_{r_0}-x=\bar u_{(i,k,r_0)}-x$ is greater than zero, or not. If the system was not empty at arrival time, the lag is simply $\bar u_{(i,k,r_0)}$ due to Assumption~\ref{assumptiongapuse} (we provide more details on this in Remark \ref{remark: sufficient cond_supermodel}). This, combined with  \eqref{fi0A_BB} and \eqref{P(J=(i,k))_B_2}, yields
\begin{align*}
\tilde{G}^{(j,l,r_1)}_{(i,k,r_0)}(s)=
&\lim_{n\to\infty}\E\left[e^{-sG^{(n)}}1_{\{J_{n+1}=(j,l,r_1)\}}|X_{n-1}\geq 1,J_n=(i,k,r_0)\right]\P(X_{n-1}\geq 1|J_{n}=(i,k,r_0))\, + \nonumber\\
&\lim_{n\to\infty}\E\left[e^{-sG^{(n)}}1_{\{J_{n+1}=(j,l,r_1)\}}|X_{n-1}=0,J_n=(i,k,r_0)\right]\P(X_{n-1}=0|J_{n}=(i,k,r_0))\nonumber\\
=&\,\E\left[e^{-sG^{(n)}}1_{\{J_{n+1}=(j,l,r_1)\}}| E_{(u_{(i,k,r_0)}-\Delta_{r_0})}\right]\left(1-\bar f_{(i,k,r_0)}\right)\,+\nonumber\\
&\Bigg(\int_{x=0}^{\infty}\lambda e^{-\lambda x}\E\left[e^{-sG^{(n)}}1_{\{J_{n+1}=(j,l,r_1)\}}| E_{\max(u_{(i,k,r_0)}-\Delta_{r_0}-x,0)}\right]{\rm d}x \Bigg)\bar f_{(i,k,r_0)},\nonumber\\
\end{align*}
which can easily be rewritten to \eqref{G_ij_B}.
\end{proof}

It thus remains to find expressions for $\tilde{G}_{(j,l,r_1)}(s,y)$ and $\pi_{(i,k,r_0)}$, which we do in the following two lemmas. We denote by $E^{(n)}_{r}$ the event that the $n$-th driver does not succeed in his first attempt, given that he has profile $r$. 

\begin{lemma}\label{lemmaGsy}
The conditional service-time LST of a vehicle, given the event $E_y$, is given by
\begin{align*}
\tilde{G}_{(1,l,r)}(s,y) &= p_{r}p_{(1,l,r)}e^{-q\max\{u_{(1,l,r)}-y,0\}}e^{-s\Delta_{r}},\\
\tilde{G}_{(j,l,r)}(s,y) &= p_{r}p_{(j,l,r)}\left(1-\E\left[e^{-(s+q)\max\{T_{(1,r)}-y,0\}}\right]\right)\nonumber\\
&\:\:\:\times
e^{-(s(y+\Delta_{r})+qu_{(j,l,r)})} \left(\frac{q}{s+q}\right)^{j-1}\prod\limits_{m=2}^{j-1}\left(1-\E\left[e^{-(s+q)T_{(m,r)}}\right]\right) ,
\end{align*}
for $j=2,3,\dots,N$.

\end{lemma}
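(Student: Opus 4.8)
The plan is to condition on the driver's profile and on the attempt at which the gap is finally accepted, and then to decompose the service time as the scanning time accumulated over the \emph{rejected} attempts plus the deterministic merging time $\Delta_r$. Reading the indicator $1_{\{J_{n+1}=(j,l,r)\}}$ as the event that the driver has profile $r$ (probability $p_r$), rejects attempts $1,\dots,j-1$, and accepts at attempt $j$ with sampled critical gap $u_{(j,l,r)}$ (probability $p_{(j,l,r)}$), the central observation is that, since the major road is fed by a Poisson($q$) stream, the successive inter-vehicle gaps inspected by the driver are independent $\mathrm{Exp}(q)$ variables; by the memoryless property the residual gap following the head start of length $y$ provided by the event $E_y$ is again $\mathrm{Exp}(q)$, and the critical gaps $T_{(m,r)}$ are sampled independently across attempts. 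Consequently the partial LST in \eqref{G_(j,l)(s,y)_B} factorizes into a product of per-attempt contributions, which I would compute one attempt at a time.

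For a rejected attempt $m\in\{2,\dots,j-1\}$ the driver traverses the whole rejected gap $G_m\sim\mathrm{Exp}(q)$ — this time is added to the scanning phase — on the event $\{G_m<T_{(m,r)}\}$; integrating out $G_m$ and then averaging over $T_{(m,r)}$ gives the factor $\frac{q}{s+q}\bigl(1-\E[e^{-(s+q)T_{(m,r)}}]\bigr)$. The first attempt is the exceptional one: here the inspected gap equals $y+R_1$ with $R_1\sim\mathrm{Exp}(q)$, so rejection means $R_1<\max\{T_{(1,r)}-y,0\}$ while the traversed (scanning) time is $y+R_1$; the same elementary integration then produces $e^{-sy}\frac{q}{s+q}\bigl(1-\E[e^{-(s+q)\max\{T_{(1,r)}-y,0\}}]\bigr)$, in which both the prefactor $e^{-sy}$ and the truncation $\max\{T_{(1,r)}-y,0\}$ encode the head start. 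The accepted attempt $j$ contributes only the acceptance probability $\P(\mathrm{Exp}(q)\ge u_{(j,l,r)})=e^{-qu_{(j,l,r)}}$ together with the merging transform $e^{-s\Delta_r}$, because the driver merges at the very start of the accepted gap and so adds nothing further to the scanning time. Multiplying these independent factors with $p_rp_{(j,l,r)}$ and collecting the $j-1$ copies of $\frac{q}{s+q}$ into $\left(\frac{q}{s+q}\right)^{j-1}$ reproduces the stated expression for $j\ge2$.

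The base case $j=1$ is immediate: there are no rejected attempts, the service time is exactly $\Delta_r$, and acceptance requires only that the head-start-augmented first gap exceed $u_{(1,l,r)}$, an event of probability $e^{-q\max\{u_{(1,l,r)}-y,0\}}$, which yields the first displayed identity. I expect the delicate point to be precisely the bookkeeping for this first attempt: the asymmetry that the head start $y$ enters the scanning time (hence the transform) \emph{only} when the first gap is rejected, whereas on a successful first attempt the driver merges immediately and $y$ disappears. Pinning down the $\max\{\cdot,0\}$ truncations and the single surviving $e^{-sy}$, together with a clean justification of the cross-attempt factorization from memorylessness, is where the argument needs care; the remaining steps are routine integrals of exponentials.
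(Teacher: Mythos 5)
Your proposal is correct and follows essentially the same route as the paper's proof: condition on the profile and the sequence of sampled critical gaps, use memorylessness of the Poisson major-road stream to factorize the partial LST into per-attempt contributions, treat the first attempt specially via the head start $y$ (yielding the $e^{-sy}$ prefactor and the $\max\{T_{(1,r)}-y,0\}$ truncation), and let the accepted attempt contribute only $e^{-qu_{(j,l,r)}}e^{-s\Delta_r}$. The paper merely writes out the sums over the sampled values $u_{(m,k,r)}$ explicitly before recognizing them as expectations over $T_{(m,r)}$, so no substantive difference remains.
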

\begin{proof}
For $j=1$, the expression $\tilde{G}_{(j,l,r)}(s,y)$ simply follows from computing the probability that a customer is of type $r$ \emph{and} merges successfully during its first attempt with critical gap $u_{(1,l,r)}$, for $l=1,2,\dots,M$, \emph{and} faces a gap that is greater than $u_{(1,l,r)}-y$. In this case, the customer's service time is $\Delta_{r}$. For the case $j\geq2$ we find
\begin{align}
\tilde{G}_{(j,l,r)}(s,y)
&=p_{r}\sum_{l_1=1}^{M}p_{(1,l_1,r)}1_{\{u_{(1,l_1,r)}> y\}}\int_0^{u_{(1,l_1,r)}-y}qe^{-qt} e^{-s(y+t)} \mathbb{E}\left[e^{-sG^{(n)}}1_{\{J_{n+1}=(j,l,r)\}}|E^{(n)}_{r}\right]{\rm d}t \nonumber\\
 &=p_{r}\sum_{l_1=1}^{M}p_{(1,l_1,r)}1_{\{u_{(1,l_1,r)}> y\}} \frac{qe^{-sy}}{s+q}(1-e^{-(s+q)(u_{(1,l_1,r)}-y)})\mathbb{E}\left[e^{-sG^{(n)}}1_{\{J_{n+1}=(j,l,r)\}}|E^{(n)}_{r}\right]\nonumber\\
&=p_{r}\sum_{l_1=1}^{M}p_{(1,l_1,r)} \frac{qe^{-sy}}{s+q}(1-e^{-(s+q)\text{max}\{u_{(1,l_1,r)}-y,0\}})\mathbb{E}\left[e^{-sG^{(n)}}1_{\{J_{n+1}=(j,l,r)\}}|E^{(n)}_{r}\right]\nonumber\\
&=p_{r}\frac{qe^{-sy}}{s+q}\left(1-\E\left[e^{-(s+q)\text{max}\{T_{(1,r)}-y,0\}}\right]\right)\mathbb{E}\left[e^{-sG^{(n)}} 1_{\{J_{n+1}=(j,l,r)\}}|E^{(n)}_{r}\right];\nonumber
\end{align}
here it should be kept in mind that
\[\mathbb{E}\left[e^{-sG^{(n)}} 1_{\{J_{n+1}=(j,l,r)\}}|E^{(n)}_{r}\right]\]
does not depend on $n$ and can be rewritten as
\begin{align}
\lefteqn{ \left( \prod\limits_{m=2}^{j-1}\sum_{k=1}^{M}p_{(m,k,r)} \int_0^{u_{(m,k,r)}}qe^{-qt}e^{-st}{\rm d}t \right) p_{(j,l,r)} \int_{u_{(j,l,r)}}^\infty qe^{-qt}e^{-s\Delta_{r}}{\rm d}t}   \nonumber\\
&=p_{(j,l,r)}e^{-(s\Delta_{r}+qu_{(j,l,r)})} \left(\frac{q}{s+q}\right)^{j-2}\prod\limits_{m=2}^{j-1}\sum_{k=1}^{M}p_{(m,k,r)}(1-e^{-(s+q)u_{(m,k,r)}})\nonumber\\
&=p_{(j,l,r)}e^{-(s\Delta_{r}+qu_{(j,l,r)})} \left(\frac{q}{s+q}\right)^{j-2}\prod\limits_{m=2}^{j-1}\left(1-\sum_{k=1}^{M}p_{(m,k,r)}e^{-(s+q)u_{(m,k,r)}}\right)\nonumber\\
&=p_{(j,l,r)}e^{-(s\Delta_{r}+qu_{(j,l,r)})} \left(\frac{q}{s+q}\right)^{j-2}\prod\limits_{m=2}^{j-1}\left(1-\E\left[e^{-(s+q)T_{(m,r)}}\right]\right).\nonumber
\end{align}
Slightly rewriting this last expression completes the proof of Lemma \ref{lemmaGsy}.
\end{proof}

The next lemma shows how to compute the probabilities $\pi_{(i,k,r)}$. We denote by $\tau_q$ the generic interarrival time between two vehicles on the \emph{major} road, which is exponentially distributed with parameter $q$. Let $\hat v$ be defined as $\max\{v,0\}$, and $\check v$   as $\max\{-v,0\}$.  In addition, $v^{(1,k,r)}_{(i,l,r_0)}=u_{(1,k,r)}-u_{(i,l,r_0)}+\Delta_{r_0}$.

\begin{lemma}\label{lemmapis}
The probability that a driver of profile $r$ is served in his $i$-th attempt with critical headway  $u_{(i,k,r)}$
is given by
\begin{align}
\pi_{(i,k,r)}=P_{(i,k,r)}\prod\limits_{m=1}^{i-1}\left(1-\sum_{k_m=1}^Mp_{(m,k_m,r)}P_{(m,k_m,r)}\right),\label{pi_i_BB}
\end{align}
where, for $i\in\{2,3,\dots,N\}$ and $k\in\{1,2,\dots,M\}$,
\begin{align}
P_{(i,k,r)}=\mathbb{P}(\tau_q\geq u_{(i,k,r)})=e^{-qu_{(i,k,r)}}. \label{Pj_BB}
\end{align}
The remaining  $P_{(1,k,r)}$ can be found by solving the system of linear equations
\begin{align}\label{P(1,k)_eq_B}
&\Bigg(1 +p_{(1,k,r)}\left(-p_{r}e^{-q\hat v^{(1,k,r)}_{(1,k,r)}}+c_{(k,r,r)} \right) \Bigg)P_{(1,k,r)}\\
&+\sum_{\substack{r_0=1,\\r_0\neq r}}^R\Bigg(-p_{r_0}e^{-q\hat v^{(1,k,r)}_{(1,k,r_0)}}+c_{(k,r_0,r)} \Bigg)p_{(1,k,r_0)}P_{(1,k,r_0)}\nonumber\\
&+\sum_{r_0=1}^{R}\sum_{\substack{l_1=1,\\l_1\neq k}}^M\Bigg(-p_{r_0}e^{-q \hat v^{(1,k,r)}_{(1,l_1,r_0)}}+c_{(k,r_0,r)} \Bigg)p_{(1,l_1,r_0)}P_{(1,l_1,r_0)}  \nonumber\\
=&\sum_{r_0=1}^R\sum_{l=1}^M\sum_{i=1}^{N}\Bigg(1-e^{-\lambda\check v^{(1,k,r)}_{(i,l,r_0)}} -e^{-q\hat v^{(1,k,r)}_{(i,l,r_0)}}+\frac{\lambda}{\lambda+q}e^{-(\lambda+q)\check v^{(1,k,r)}_{(i,l,r_0)}-qv^{(1,k,r)}_{(i,l,r_0)}} \nonumber\\
&+\frac{q}{\lambda+q}e^{-\lambda\bar u_{(i,l,r_0)}-qu_{(1,k,r)}} \Bigg)f_{(i,l,r_0)}(0) +\sum_{r_0=1}^Rc_{(k,r_0,r)},\nonumber
\end{align}
for $k\in\{1,2,\dots,M\}$ and $ r\in\{1,2,\dots,R\}$,
where \[
c_{(k,r_0,r)}=\sum_{l=1}^M\sum_{i=2}^{N}p_{r_0}p_{(i,l,r_0)}e^{-q(u_{(i,l,r_0)}+\hat v^{(1,k,r)}_{(i,l,r_0)})} \prod\limits_{m=2}^{i-1}\left(1-\E[e^{-qT_{(m,r_0)}}]\right).
\]
\end{lemma}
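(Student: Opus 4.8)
The plan is to establish the three parts of the lemma in turn. I would read $\pi_{(i,k,r)}$ as the probability, conditional on the driver having profile $r$ and sampling $u_{(i,k,r)}$ at his successful $i$-th attempt, that he fails attempts $1,\dots,i-1$ and then succeeds at attempt $i$; this reading is forced by \eqref{P(J=(i,k))_B_2}. The key structural fact is that, by the lack of memory of the Poisson major-road stream, the gap inspected at each attempt $m\geq 2$ is a fresh interarrival time $\tau_q$, independent of the outcomes of all earlier attempts and of the driver's (independently sampled) headways; the only attempt whose success probability is not of this plain fresh-gap type is the first, where the driver may reuse part of the gap left by his predecessor. Writing $P_{(m,k_m,r)}$ for the success probability at attempt $m$ given the sampled headway $u_{(m,k_m,r)}$, this independence factorizes the event, with failure probability $1-\sum_{k_m}p_{(m,k_m,r)}P_{(m,k_m,r)}$ at attempt $m$ (averaging over the sampled headway) and a final success factor $P_{(i,k,r)}$, giving \eqref{pi_i_BB}. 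Formula \eqref{Pj_BB} is then immediate: for $i\geq 2$ no lag is available and the inspected gap is a full $\tau_q$, so $P_{(i,k,r)}=\P(\tau_q\geq u_{(i,k,r)})=e^{-qu_{(i,k,r)}}$.

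The real content is the system \eqref{P(1,k)_eq_B} for the first-attempt probabilities $P_{(1,k,r)}$. I would start from the fact, read off from $\tilde G_{(1,l,r)}(s,y)$ in Lemma \ref{lemmaGsy}, that the first-attempt success probability in the presence of a predecessor lag $y$ equals $e^{-q\max\{u_{(1,k,r)}-y,0\}}$, so that $P_{(1,k,r)}=\E[e^{-q\max\{u_{(1,k,r)}-Y,0\}}]$ for the random lag $Y$ seen at the first attempt. Since the driver's profile and sample are independent of his predecessor, the law of $Y$ is the stationary joint law of (predecessor type, empty-versus-nonempty queue). Conditioning on the predecessor being of type $(i,l,r_0)$, I would split into two cases. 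If the driver was a queuer ($X_{n-1}\geq 1$), the lag is exactly $\bar u_{(i,l,r_0)}$, contributing $e^{-q\hat v^{(1,k,r)}_{(i,l,r_0)}}$ with weight $\pi_{(i,l,r_0)}p_{(i,l,r_0)}p_{r_0}-f_{(i,l,r_0)}(0)$; if he arrived to an empty queue (weight $f_{(i,l,r_0)}(0)$), the lag has decayed to $\max\{\bar u_{(i,l,r_0)}-x,0\}$ over an exponential interarrival time $x$ of rate $\lambda$. I would then evaluate $\int_0^\infty\lambda e^{-\lambda x}e^{-q\max\{u_{(1,k,r)}-\max\{\bar u_{(i,l,r_0)}-x,0\},0\}}\,\mathrm{d}x$ by splitting the range at $x=\bar u_{(i,l,r_0)}$ and writing everything through $v^{(1,k,r)}_{(i,l,r_0)}=u_{(1,k,r)}-\bar u_{(i,l,r_0)}$. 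A short computation, uniform in the sign of $v$ once phrased with $\hat v$ and $\check v$, shows this integral equals the bracketed expression in \eqref{P(1,k)_eq_B} plus an extra $e^{-q\hat v^{(1,k,r)}_{(i,l,r_0)}}$, and this extra term cancels exactly the $-f_{(i,l,r_0)}(0)e^{-q\hat v}$ contributed by the queueing weights.

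The main obstacle, and the reason \eqref{P(1,k)_eq_B} is a genuine linear system rather than a closed formula, is that the queueing weights $\pi_{(i,l,r_0)}$ themselves involve the unknowns $P_{(1,\cdot,\cdot)}$: directly for $i=1$, where $\pi_{(1,l,r_0)}=P_{(1,l,r_0)}$, and implicitly for $i\geq 2$ through the hidden $m=1$ factor $1-\sum_{l_1}p_{(1,l_1,r_0)}P_{(1,l_1,r_0)}$ in \eqref{pi_i_BB}. I would therefore isolate the $i=1$ predecessor contributions and, using the definition of $c_{(k,r_0,r)}$, rewrite the whole $i\geq 2$ queueing contribution as $\sum_{r_0}c_{(k,r_0,r)}\bigl(1-\sum_{l_1}p_{(1,l_1,r_0)}P_{(1,l_1,r_0)}\bigr)$. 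Moving every term that carries a $P_{(1,\cdot,\cdot)}$ factor to the left-hand side produces exactly the coefficients $1+p_{(1,k,r)}(-p_r e^{-q\hat v^{(1,k,r)}_{(1,k,r)}}+c_{(k,r,r)})$ and $p_{(1,l_1,r_0)}(-p_{r_0}e^{-q\hat v^{(1,k,r)}_{(1,l_1,r_0)}}+c_{(k,r_0,r)})$ displayed in \eqref{P(1,k)_eq_B}, while the $f_{(i,l,r_0)}(0)$-weighted brackets and the standalone $\sum_{r_0}c_{(k,r_0,r)}$ remain on the right. Keeping this bookkeeping straight, together with the $e^{-q\hat v}$ cancellation noted above, is the delicate step; everything else is routine.
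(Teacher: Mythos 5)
Your proposal is correct and follows essentially the same route as the paper: the factorization \eqref{pi_i_BB} via memorylessness, the fresh-gap argument for $i\geq 2$, and for $i=1$ conditioning on the predecessor's type split over empty versus non-empty queue, with the same exponential-lag integral, the same cancellation of the $e^{-q\hat v^{(1,k,r)}_{(i,l,r_0)}}f_{(i,l,r_0)}(0)$ terms, and the same reorganization of the $i\geq 2$ contributions into $c_{(k,r_0,r)}\bigl(1-\sum_{l_1}p_{(1,l_1,r_0)}P_{(1,l_1,r_0)}\bigr)$ to produce the linear system. The identity you assert (integral equals the bracket plus $e^{-q\hat v^{(1,k,r)}_{(i,l,r_0)}}$) checks out, so no gaps.
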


\begin{proof}
Expression \eqref{pi_i_BB} simply follows from the definition of $\pi_{(i,k,r)}$, by multiplying the probabilities that the driver does \emph{not} succeed in attempts $1, 2, \dots, i-1$ (each time distinguishing between all $M$ possible random critical gap values) and finally multiplying with the probability that he succeeds in the $i$-th attempt with critical headway $u_{(i,k,r)}$.

For $i\in\{2,3,\ldots,N\}$ it is easy to determine $P_{(i,k,r)}$ because we do not have to take into account any gap left by the predecessor; we simply compute the probability that the critical gap $u_{(i,k,r)}$ is smaller than the remaining interarrival time $\tau_q$ (which, due to the memoryless property, is again exponentially distributed with rate $q$).

The case $i=1$ is considerably more complicated.
Using \eqref{P(J=(i,k))_B_2} and noting that $\pi_{(1,k,r)}=P_{(1,k,r)}$, we can find a system of equations for $P_{(1,k,r)}$ by conditioning on the type of the \emph{predecessor} of the vehicle under consideration.
Define
\[\chi_{(i,l,r_0)}^{(j,k,r)}:= \P(J_n=(j,k,r)|J_{n-1}=(i,l,r_0)),\]
and
\begin{align}\phi_{(i,l,r_0)}^{(j,k,r)}&\,:= \P(J_n=(j,k,r)|J_{n-1}=(i,l,r_0),X_{n-2}=0),\\
\psi_{(i,l,r_0)}^{(j,k,r)}&\,:= \P(J_n=(j,k,r)|J_{n-1}=(i,l,r_0),X_{n-2}\ge 1).\end{align}
Evidently,
\begin{align}
\label{first_empty_B}
P_{(1,k,r)}=&\frac{1}{p_{(1,k,r)}p_r}\P(J_n=(1,k,r))\\
=&\frac{1}{p_{(1,k,r)}p_r}\sum_{r_0=1}^R\sum_{l=1}^M\sum_{i=1}^{N}
\chi_{(i,l,r_0)}^{(1,k,r)}\,\P(J_{n-1}=(i,l,r_0)) \nonumber\\
=&\frac{1}{p_{(1,k,r)}p_r}\sum_{r_0=1}^R\sum_{l=1}^M\sum_{i=1}^{N}\chi_{(i,l,r_0)}^{(1,k,r)}\,\pi_{(i,l,r_0)}p_{(i,l,r_0)}p_{r_0} \nonumber\\
=&\frac{1}{p_{(1,k,r)}p_r}\sum_{r_0=1}^R\sum_{l=1}^M\sum_{i=1}^{N}\Big(\phi_{(i,l,r_0)}^{(1,k,r)}\,\P(X_{n-2}=0|J_{n-1}=(i,l,r_0))\,+\nonumber\\
&\:\:\:\:\:\psi_{(i,l,r_0)}^{(1,k,r)}\,\P(X_{n-2}\geq 1|J_{n-1}=(i,l,r_0))\Big)\pi_{(i,l,r_0)}p_{(i,l,r_0)}p_{r_0}  \nonumber,
\end{align}
which can be further evaluated, using notation introduced before, as
\begin{align}
&\frac{1}{p_{(1,k,r)}p_r}\sum_{r_0=1}^R\sum_{l=1}^M\sum_{i=1}^{N}
\Bigg(\phi_{(i,l,r_0)}^{(1,k,r)}\, {\bar f_{(i,l,r_0)} } +\psi_{(i,l,r_0)}^{(1,k,r)}\left(1-\bar f_{(i,l,r_0)}\right)\Bigg)\bar \pi_{(i,l,r_0)} , \nonumber
\end{align}
with $\bar \pi_{(i,l,r_0)}:=
\pi_{(i,l,r_0)}p_{(i,l,r_0)}p_{r_0}$.
We now consider the individual terms separately. Observe that, by conditioning on the value of $\tau_q$,
\begin{align}
\frac{1}{p_{(1,k,r)}p_r}\phi_{(i,l,r_0)}^{(1,k,r)}\, {\bar f_{(i,l,r_0)} }\bar \pi_{(i,l,r_0)}&=\Bigg(\int_{0}^{\max\{\bar u_{(i,l,r_0)}-u_{(1,k,r)},0\}}\lambda e^{-\lambda x}{\rm d}x\nonumber\\ &+\int_{\max\{\bar u_{(i,l,r_0)}-u_{(1,k,r)},0\}}^{\bar u_{(i,l,r_0)}}\mathbb{P}(\tau_q \geq u_{(1,k,r)}-\bar u_{(i,l,r_0)}+x)\lambda e^{-\lambda x}{\rm d}x \nonumber\\
&+ \int_{\bar u_{(i,l,r_0)}}^{\infty}\mathbb{P}(\tau_q \geq u_{(1,k,r)})\lambda e^{-\lambda x}{\rm d}x \Bigg)f_{(i,l,r_0)}(0)\nonumber.
\end{align}
Also,
\begin{align*}
\frac{1}{p_{(1,k,r)}p_r}&\psi_{(i,l,r_0)}^{(1,k,r)}\, (1-{\bar f_{(i,l,r_0)} })\bar \pi_{(i,l,r_0)}\\
&=\,\mathbb{P}\left(\tau_q \geq \max\{u_{(1,k,r)}-\bar u_{(i,l,r_0)},0\}\right)\left(\pi_{(i,l,r_0)}p_{(i,l,r_0)}p_{r_0}-f_{(i,l,r_0)}(0)\right)
.\end{align*}
Combining the above, it follows that $P_{(1,k,r)}$ equals, with $v^{(1,k,r)}_{(i,l,r_0)}$ as defined above,
\begin{align}
\sum_{r_0=1}^R&\sum_{l=1}^M\sum_{i=1}^{N}\Bigg(1-e^{-\lambda \check v^{(1,k,r)}_{(i,l,r_0)}} -e^{-q\hat v^{(1,k,r)}_{(i,l,r_0)}}+\frac{\lambda}{\lambda+q}e^{-(\lambda+q)\check v^{(1,k,r)}_{(i,l,r_0)}-qv^{(1,k,r)}_{(i,l,r_0)}} \nonumber\\
&+\frac{q}{\lambda+q}e^{-\lambda(u_{(i,l,r_0)}-\Delta_{r_0})-qu_{(1,k,r)}} \Bigg)f_{(i,l,r_0)}(0)+\sum_{r_0=1}^R\sum_{l=1}^M\sum_{i=1}^{N}e^{-q\hat v^{(1,k,r)}_{(i,l,r_0)}}\pi_{(i,l,r_0)} p_{(i,l,r_0)}p_{r_0}.\nonumber
\end{align} \\
These expressions can be written in a more convenient form. To this end,
it is noted that
\begin{align*}
\lefteqn{\sum_{r_0=1}^R\sum_{l=1}^M\sum_{i=1}^{N}e^{-q \hat v^{(1,k,r)}_{(i,l,r_0)}}\pi_{(i,l,r_0)}p_{(i,l,r_0)}p_{r_0}}\\&=\sum_{r_0=1}^R\sum_{l=1}^Me^{-q\hat v^{(1,k,r)}_{(1,l,r_0)}} P_{(1,l,r_0)}p_{(1,l,r_0)}p_{r_0}
+\sum_{r_0=1}^R\sum_{l=1}^M\sum_{i=2}^{N}e^{-q\hat v^{(1,k,r)}_{(i,l,r_0)}}\pi_{(i,l,r_0)}p_{(i,l,r_0)}p_{r_0} \nonumber\\
&=\sum_{r_0=1}^R\sum_{l=1}^Me^{-q\hat v^{(1,k,r)}_{(1,l,r_0)}}P_{(1,l,r_0)}p_{(1,l,r_0)}p_{r_0}\nonumber\\
&+\sum_{r_0=1}^R\sum_{l=1}^M\sum_{i=2}^{N}e^{-q\hat v^{(1,k,r)}_{(i,l,r_0)}}p_{(i,l,r_0)}p_{r_0}P_{(i,l,r_0)} \prod\limits_{m=1}^{i-1}\left(1-\sum_{l_m=1}^Mp_{(m,l_m,r_0)}P_{(m,l_m,r_0)}\right).
\end{align*}
After some basic algebraic manipulations, and using \eqref{Pj_BB} we obtain
\begin{align}\label{one_B}
&\sum_{r_0=1}^R\sum_{l=1}^M\sum_{i=1}^{N}e^{-q \hat v^{(1,k,r)}_{(i,l,r_0)}}\pi_{(i,l,r_0)}p_{(i,l,r_0)}p_{r_0}=\sum_{r_0=1}^R\sum_{l_1=1}^M\Bigg( e^{-q\hat v^{(1,k,r)}_{(1,l_1,r_0)}}\\
&-\sum_{l=1}^M\sum_{i=2}^{N}p_{(i,l,r_0)}e^{-q(u_{(i,l,r_0)}+\hat v^{(1,k,r)}_{(i,l,r_0)})}\prod\limits_{m=2}^{i-1} \left(1-\sum_{l_m=1}^Mp_{(m,l_m,r_0)}e^{-qu_{(m,l_m,r_0)}}\right)\Bigg)P_{(1,l_1,r_0)}p_{(1,l_1,r_0)}p_{r_0} \nonumber\\
&+\sum_{r_0=1}^R\sum_{l=1}^M\sum_{i=2}^{N}p_{r_0}p_{(i,l,r_0)}e^{-q(u_{(i,l,r_0)}+\hat v^{(1,k,r)}_{(i,l,r_0)})}\prod\limits_{m=2}^{i-1}\left(1-\sum_{l_m=1}^Mp_{(m,l_m,r_0)}e^{-qu_{(m,l_m,r_0)}}\right). \nonumber
\end{align}
Using Equation \eqref{one_B} in \eqref{first_empty_B}, and after simplification, we obtain the linear system of equations \eqref{P(1,k)_eq_B}, which proves Lemma \ref{lemmapis}.
\end{proof}

\begin{remark}\label{remark: sufficient cond_supermodel}
We now discuss in more detail Assumption \ref{assumptiongapuse}, and how it plays a role in our result. The assumption entails  that the gap $T_{(i,r_0)}-\Delta_{r_0}$ left by a driver of profile $r_0$, while being successful in the $i$-th attempt, can be used by the subsequent driver only.
If we would allow more than one following vehicle use a gap left behind by a merging vehicle, the analysis becomes much more complicated due to the fact that we need to distinguish between all possible cases where multiple drivers fit into this lag.

For example, in the proof of Theorem \ref{thm1} we use explicitly that, if the system was not empty at arrival time, the gap left behind by a predecessor of type $(j, l, r_1)$ is simply $\bar u_{(j,l,r_1)} = u_{(j,l,r_1)}-\Delta_{r_1}$. However, a situation where this is not necessarily true, occurs when $j=1$  (meaning that the predecessor succeeded at his first attempt) and required a critical gap of $u_{(1,l,r_1)}$ that is \emph{smaller} than the remaining gap of the \emph{predecessor's predecessor}, say $\bar u_{(i,k,r_0)}$. In this case, the gap left behind by the predecessor is equal to
\[
u_{(i,k,r_0)} - \Delta_{r_0} - \Delta_{r_1} > u_{(1,l,r_1)} - \Delta_{r_1},
\]
meaning that the current vehicle and its predecessor both used the same gap $u_{(i,k,r_0)}$.
In this sense, Assumption \ref{assumptiongapuse} ensures the tractability of the model.

Assumption \ref{assumptiongapuse} has not been stated in terms of the model input parameters, and is therefore not straightforward to check. To remedy this, we give an equivalent definition in terms of the critical headway and the merging time. Assumption \ref{assumptiongapuse} is satisfied if and only if
\begin{align}\label{condition: B}
u_{(1,l,r_1)}\geq \bar u_{(i,k,r_0)}= u_{(i,k,r_0)}-\Delta_{r_0},
\end{align}
for all $i\in\{1,2,\dots,N\}$, $k,l\in\{1,2,\dots,M\}$, and $r_0,r_1\in\{1,2,\dots,R\}$.
In the next section, we will numerically show that in cases that Assumption \ref{assumptiongapuse}
is not fulfilled, the estimated capacity of the minor road is slightly smaller, but still highly accurate.
The reason why the estimated capacity is a \emph{lower} bound for the true capacity, is the fact that we waste some capacity by not allowing more than one vehicle to use the remaining part of a critical gap. The astute reader might have noticed that the maximum operators in Lemma~\ref{lemmaGsy} are not really needed due to condition \eqref{condition: B}. It turns out, however, that by preventing the corresponding terms from becoming negative, the capacity is approximated much better even when the condition is violated.
\end{remark}

\section{Numerical results}\label{numerics}

The analysis from the previous sections facilitates the evaluation of the performance of the system, including the assessment of the sensitivity of the capacity when varying model parameters. In this section, we present numerical examples to demonstrate the impact of these model parameters and of different driver behavior.

\subsection{Example 1:} In this illustrative example, we distinguish between two driver profiles. Profile~1 represents `standard' traffic, whereas profile 2 can be considered as `slower' traffic (for example large, heavily loaded vehicles). We assume that the ratio between profile 1 and profile 2 vehicles is $90\%/10\%$. The fact that profile 1 vehicles are faster than profile 2 vehicles is captured in their merging  times (respectively $\Delta_1$ and $\Delta_2$) and in the length of their critical gaps.
From the profile 1 drivers, we assume that 40\% need a gap of at least 5 seconds, upon arrival at the intersection, while the remaining 60\% need a gap of at least 6 seconds. If, however, the drivers do not find an acceptable gap right away, they will grow impatient and be more and more prepared to accept slightly smaller gaps. We introduce an impatience rate $\alpha\in(0,1)$ that determines how fast critical gaps decrease. Profile 2 vehicles are assumed to be slower, meaning that they have a need critical gaps of respectively 8 seconds (50\%) or 9 seconds (50\%) at their first attempt. Summarizing, we have the following model parameters in this example:

\[
\begin{array}{|r|r|}
\hline
\multicolumn{1}{|c|}{\text{Profile 1}} & \multicolumn{1}{|c|}{\text{Profile 2}} \\
\hline
p_1 = 0.9 & p_2=0.1\\
\hline
p_{i,1,1}=0.4 & p_{i,1,2}=0.5 \\
p_{i,2,1}=0.6 & p_{i,2,2}=0.5 \\
\hline
u_{1,1,1}= 5.0 & u_{1,1,2}= 8.0 \\
u_{1,2,1}= 6.0 & u_{1,2,2}= 9.0 \\
\hline
\end{array}\\[1ex]
\]
and, due to the impatience, we have for both profiles:
\begin{align}\label{sequence T_i}
u_{(i+1,k,r)}=\alpha(u_{(i,k,r)}-\Delta_r)+\Delta_r, \quad i=1,2,\dots,N-1,~ k=1,2,~ r=1,2.
\end{align}
We vary $\alpha$, $\Delta_1$ and $\Delta_2$ to gain insight into the impact of these parameters on the capacity of the minor road, while also varying $q$, the arrival rate on the major road. We take $\alpha\in\{0.6,0.8,0.9,1.0\}$, $\Delta_1\in\{4,5\}$ and $\Delta_2\in\{5,6,7\}$ and vary $q$ between 0 and 1000 vehicles per hour.
Note that $\alpha=1$ corresponds to a model \emph{without} impatience. The results are depicted in Figures \ref{fig:example1}(a) and \ref{fig:example1}(b).
In Figure \ref{fig:example1}(a), we can observe how the capacity of the minor road increases when merging time corresponding to at least one of the driver profiles decreases. From Figure \ref{fig:example1}(b) we conclude by how much the capacity of the minor road increases when drivers become more impatient. The biggest capacity gain is caused by a decrease in $\Delta_1$ from 5 to 4, obviously because profile 1 constitutes the vast majority of all vehicles.

\begin{figure}[!ht]
\parbox{0.49\linewidth}{\centering
\includegraphics[width=\linewidth]{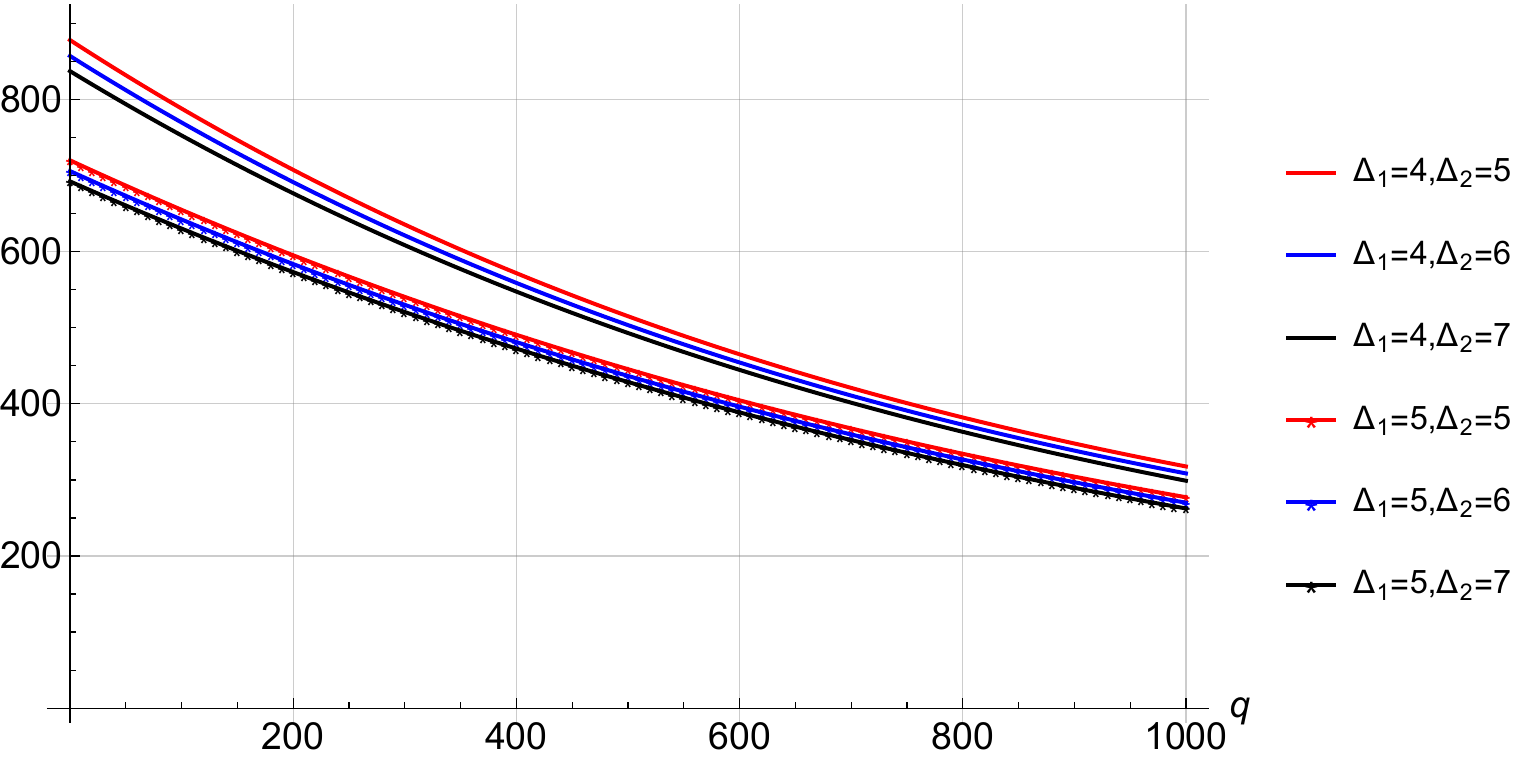}\\
\scriptsize (a) Impact of merging times of drivers for $\alpha=0.9$. }
\parbox{0.49\linewidth}{\centering
\includegraphics[width=\linewidth]{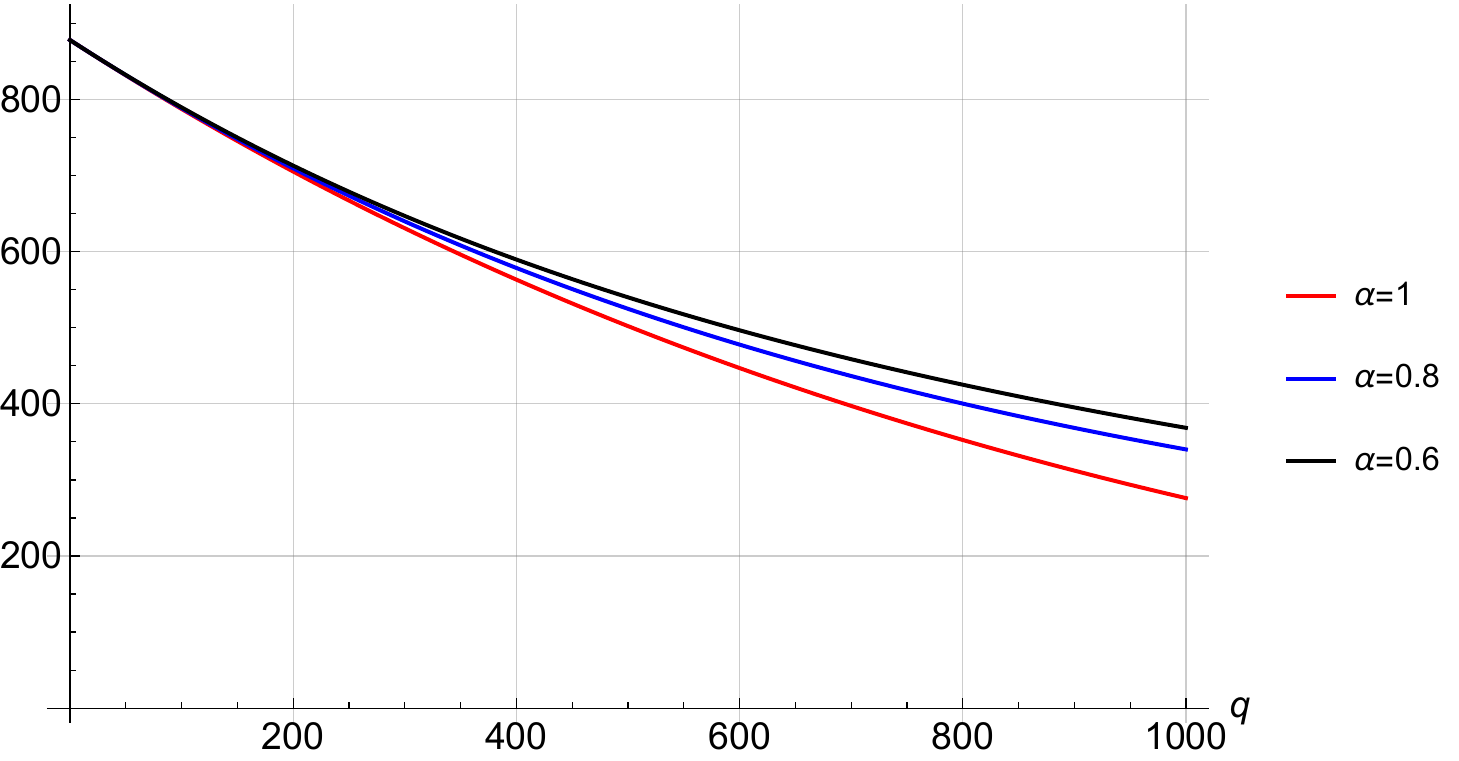}\\
\scriptsize (b) Impact of impatient behaviors of drivers for $\Delta_1=4$ seconds and $\Delta_2=5$ seconds}
\caption{Capacity of the minor road (veh/h) as a function of the flow rate on the major road (veh/h) in Example 1.}
\label{fig:example1}
\end{figure}

\subsection{Example 2:} The model parameters in Example 1 are such that condition \eqref{condition: B} is met. In practice, however, situations might arise where this is not the case. The purpose of this example is to quantify errors made when computing the capacity using the methods proposed in Section \ref{capacity on minor road}, since these methods rely on condition \eqref{condition: B}.  We take the same settings as in  Example 1, but we adjust the critical gaps  for profile 2 drivers from 8 and 9 to, respectively, 10 and 12 seconds. We keep the original merging times for both profile drivers of $\Delta_1=4$ seconds and $\Delta_2=5$ seconds. In order to quantify the errors, we use both the analysis from Section \ref{capacity on minor road} and a computer program that simulates the model and gives accurate results.
Table \ref{table:1} compares the capacities obtained with both methods, for various values of $q$ and $\alpha$. It immediately becomes apparent that the relative error is below $0.5$ percent in all cases. We conducted similar experiments for different settings and in all cases we obtained relative errors below one percent, which clearly justifies using the analysis of this paper in all practical situations, including those violating \eqref{condition: B}. As expected (see Remark \ref{remark: sufficient cond_supermodel}), the approximated capacities are (slightly) underestimating the actual capacities.


\begin{table}[h!]
\[
\begin{array}{|l|l|c|c|c|c|}
  \hline
   &   &  q=250 & q=500 & q=750 & q=1000 \\
  \hline
\alpha=1.0   & \text{Approximation} & 646.2 & 466.4  & 328.9  & 225.8  \\
    & \text{Simulation} & 647.2& 467.7 & 330.0 & 226.5 \\
    \hline
\alpha=0.9   & \text{Approximation} & 652.8  & 491.0  & 377.8 & 298.9  \\
   & \text{Simulation} & 653.7  & 491.5 & 378.0 & 299.0 \\
  \hline
\end{array}
\]\vspace{2mm}
\caption{Capacity of the minor road (veh/h) for various values of the flow rate, $q$, on the major road (veh/h) in Example 2.}
\label{table:1}
\end{table}

\section{Discussion and concluding remarks}

In this paper we have presented a gap acceptance model for unsignalized intersections that considerably generalizes the existing models.  The model proposed incorporates various realistic aspects that were not taken care of in previous studies: driver impatience, heterogeneous driving behavior, and the service time being dependent on the vehicle arriving at an empty queue or not. Despite the rather intricate system dynamics, we succeed in providing explicit expressions for the stationary queue-length distribution of vehicles on the minor road, which facilitate the evaluation of the corresponding capacity (of the minor road, that is).

We have concluded our paper by presenting a series of numerical results, which are representative of the extensive experiments that we performed. Our techniques facilitate the quantitative evaluation of the system at hand, including the assessment of the sensitivity of the capacity when varying the model parameters (which is typically considerably harder when relying on simulation).

An interesting direction in which to pursue this line of research, would be to allow a more general arrival process on the major road. In the present paper, we assume that the arrival process on the major road is a Poisson process. However, in practice there might be platoon forming on this road and several papers have shown that this clustering of vehicles will influence the capacity of the \emph{minor} road (cf. \cite{AbhishekMMPPandImpatience,tanner62,wegmann1991,wu2001}). It would certainly be interesting to combine the framework in this paper with, for example, the gap-block arrival process in Cowan \cite{cowan75} or Markov platooning introduced in \cite{AbhishekMMPPandImpatience}.

\subsection*{Acknowledgments} The research of Abhishek and M. Mandjes is partly funded by NWO Gravitation project {\sc Networks}, grant number 024.002.003. The authors thank Onno Boxma (Eindhoven University of Technology) and Rudesindo N\'u\~nez Queija  (University of Amsterdam) for their valuable input on earlier drafts of this paper.

\bibliographystyle{abbrv}

\begin{thebibliography}{10}

\bibitem{QUESTA2017}
Abhishek, M.~A.~A. Boon, O.~J. Boxma, and R.~N\'u\~nez Queija.
\newblock A single server queue with batch arrivals and semi-{M}arkov services.
\newblock {\em Queueing Systems}, 86(3--4):217--–240, 2017.

\bibitem{abhishekcomsnets2016}
Abhishek, M.~A.~A. Boon, M.~Mandjes, and R.~N\'u\~nez Queija.
\newblock Congestion analysis of unsignalized intersections.
\newblock In {\em COMSNETS 2016: Intelligent Transportation Systems Workshop},
  pages 1--6, 2016.

\bibitem{AbhishekMMPPandImpatience}
Abhishek, M.~A.~A. Boon, M.~R.~H. Mandjes, and R.~N\'u\~nez Queija.
\newblock Congestion analysis of unsignalized intersections: {The} impact of
  impatience and {Markov} platooning.
\newblock {\tt Ar{X}iv} report, University of Amsterdam, 2017.

\bibitem{AbhishekHeavyTraffic}
Abhishek, M.~A.~A. Boon, and R.~N\'u\~nez Queija.
\newblock Heavy-traffic analysis of the ${ M^X/\text{semi-Markov}/1}$ queue.
\newblock {\tt Ar{X}iv} report, University of Amsterdam, 2018.

\bibitem{abouhenaidy94}
M.~Abou-Henaidy, S.~Teply, and J.~H. Hund.
\newblock Gap acceptance investigations in {Canada}.
\newblock In R.~Ak\c{c}elik, editor, {\em Proceedings of the Second Int. Symp.
  on Highway Capacity}, volume~1, pages 1--19, 1994.

\bibitem{AK}
I.~J. B.~F. Adan and V.~G. Kulkarni.
\newblock Single-server queue with {Markov}-dependent inter-arrival and service
  times.
\newblock {\em Queueing Systems}, 45:113--134, 2003.

\bibitem{brilon}
W.~Brilon and T.~Miltner.
\newblock Capacity at intersections without traffic signals.
\newblock {\em Transportation Research Record}, 1920:32--40, 2005.

\bibitem{brilon1997}
W.~Brilon, R.~Troutbeck, and M.~Tracz.
\newblock Review of international practices used to evaluate unsignalized
  intersections.
\newblock {\em Transportation Research Circular}, 468, 1997.
\newblock Transportation Research Board, Washington, DC.

\bibitem{brilonwu}
W.~Brilon and N.~Wu.
\newblock {\em Unsignalized Intersections - A Third Method for Analysis},
  chapter~9, pages 157--178.
\newblock 2002.

\bibitem{catchpoleplank}
E.~A. Catchpole and A.~W. Plank.
\newblock The capacity of a priority intersection.
\newblock {\em Transportation Research-B}, 20B(6):441--456, 1986.

\bibitem{cin_s}
E.~\c{C}inlar.
\newblock Time dependence of queues with semi-{Markovian} services.
\newblock {\em J.~Appl.~Probab.}, 4:356--364, 1967.

\bibitem{cheng}
T.~E.~C. Cheng and S.~Allam.
\newblock A review of stochastic modelling of delay and capacity at
  unsignalized priority intersections.
\newblock {\em European Journal of Operational Research}, 60(3):247--259, 1992.

\bibitem{cowan75}
R.~J. Cowan.
\newblock Useful headway models.
\newblock {\em Transportation Research}, 9(6):371--375, 1975.

\bibitem{daganzo1977}
C.~F. Daganzo.
\newblock Traffic delay at unsignalized intersections: clarification of some
  issues.
\newblock {\em Transportation Science}, 11(2):180--189, 1977.

\bibitem{desmit86}
J.~H.~A. de~Smit.
\newblock The single server semi-{Markov} queue.
\newblock {\em Stochastic Processes and their Applications}, 22:37--50, 1986.

\bibitem{drew3}
D.~R. Drew.
\newblock {\em Traffic flow theory and control}.
\newblock McGraw-Hill, New York, 1968.

\bibitem{drew2}
D.~R. Drew, J.~H. Buhr, and R.~H. Whitson.
\newblock The determination of merging capacity and its applications to freeway
  design and control.
\newblock Report 430-4, Texas Transportation Institute, 1967.

\bibitem{drew1}
D.~R. Drew, L.~R. LaMotte, J.~H. Buhr, and J.~Wattleworth.
\newblock Gap acceptance in the freeway merging process.
\newblock Report 430-2, Texas Transportation Institute, 1967.

\bibitem{gaver}
D.~P. Gaver.
\newblock A comparison of queue disciplines when service orientation times
  occur.
\newblock {\em Naval Res. Logist. Quart.}, 10:219--235, 1963.

\bibitem{hawkes}
A.~G. Hawkes.
\newblock Queueing for gaps in traffic.
\newblock {\em Biometrika}, 52(1/2):79--85, 1965.

\bibitem{heidemann94}
D.~Heidemann.
\newblock Queue length and delays distributions at traffic signals.
\newblock {\em Transportation Research-B}, 28(5):377--389, 1994.

\bibitem{heidemann97}
D.~Heidemann and H.~Wegmann.
\newblock Queueing at unsignalized intersections.
\newblock {\em Transportation Research-B}, 31(3):239--263, 1997.

\bibitem{liu2014}
M.~Liu, G.~Lu, Y.~Wang, and Z.~Zhang.
\newblock Analyzing drivers' crossing decisions at unsignalized intersections
  in {China}.
\newblock {\em Transportation Research-F: Traffic Psychology and Behaviour},
  24:244--255, 2014.

\bibitem{munjal}
P.~K. Munjal and L.~A. Pipes.
\newblock Propagation of on-ramp density perturbations on unidirectional two-
  and three-lane freeways.
\newblock {\em Transportation Research}, 5(4):241--255, 1971.

\bibitem{neuts66}
M.~F. Neuts.
\newblock The single server queue with {Poisson} input and semi-{Markov}
  service times.
\newblock {\em J. Appl. Probab.}, 3:202--230, 1966.

\bibitem{neuts77b}
M.~F. Neuts.
\newblock The {M/G/1} queue with several types of customers and change-over
  times.
\newblock {\em Adv. in Appl. Probab.}, 9:604--644, 1977.

\bibitem{neuts77a}
M.~F. Neuts.
\newblock Some explicit formulas for the steady-state behavior of the queue
  with semi-{Markovian} service times.
\newblock {\em Adv. in Appl. Probab.}, 9:141--157, 1977.

\bibitem{prasetijo2007}
J.~Prasetijo.
\newblock {\em Capacity and Traffic Performance of Unsignalized Intersections
  under Mixed Traffic Conditions}.
\newblock Phd thesis, Ruhr-University Bochum, 2007.

\bibitem{prasetijo2012}
J.~Prasetijo and H.~Ahmad.
\newblock Capacity analysis of unsignalized intersection under mixed traffic
  conditions.
\newblock {\em Procedia - Social and Behavioral Sciences}, 43:135--147, 2012.
\newblock 8th International Conference on Traffic and Transportation Studies
  (ICTTS 2012).

\bibitem{pur}
P.~Purdue.
\newblock A queue with {Poisson} input and semi-{Markov} service times: busy
  period analysis.
\newblock {\em J. Appl. Probab.}, 12:353--357, 1975.

\bibitem{tanner51}
J.~C. Tanner.
\newblock The delay to pedestrians crossing a road.
\newblock {\em Biometrika}, 38(3/4):383--392, 1951.

\bibitem{tanner62}
J.~C. Tanner.
\newblock A theoretical analysis of delays at an uncontrolled intersection.
\newblock {\em Biometrika}, 49(1/2):163--170, 1962.

\bibitem{wegmann1991}
H.~Wegmann.
\newblock Intersections without traffic signals {II}.
\newblock In W.~Brilon, editor, {\em A General Capacity Formula for
  Unsignalized Intersections}, pages 177--191. Springer-Verlag, 1991.

\bibitem{wei}
D.~Wei, W.~Kumfer, D.~Wu, and H.~Liu.
\newblock Traffic queuing at unsignalized crosswalks with probabilistic
  priority.
\newblock {\em To appear in Transportation Letters}, 2016.

\bibitem{weissmaradudin}
G.~H. Weiss and A.~A. Maradudin.
\newblock Some problems in traffic delay.
\newblock {\em Operations Research}, 10(1):74--104, 1962.

\bibitem{welch}
P.~D. Welch.
\newblock On a generalized {M/G/1} queuing process in which the first customer
  of each busy period receives exceptional service.
\newblock {\em Operations Research}, 12(5):736--752, 1964.

\bibitem{wu2001}
N.~Wu.
\newblock A universal procedure for capacity determination at unsignalized
  (priority-controlled) intersections.
\newblock {\em Transportation Research-B}, 35:593--623, 2001.

\bibitem{yeo}
G.~F. Yeo.
\newblock Single server queues with modified service mechanisms.
\newblock {\em Journal of the Australian Mathematical Society}, 2(4):499--507,
  1962.

\bibitem{yeoweesakul}
G.~F. Yeo and B.~Weesakul.
\newblock Delays to road traffic at an intersection.
\newblock {\em Journal of Applied Probability}, 1:297--310, 1964.

\end{thebibliography}

\end{document}